\newtheorem{theorem}{Theorem}[section]
\newtheorem{proposition}[theorem]{Proposition}
\newtheorem{lemma}[theorem]{Lemma}
\newtheorem{corollary}[theorem]{Corollary}
\begin{document}

\title{On the spectrum of sizes of semiovals contained in the Hermitian curve}
\date{}
\author{Daniele Bartoli\thanks{The author acknowledges the support of the European Community under a Marie-Curie Intra-European Fellowship (FACE project: number 626511).}, Gy\"orgy Kiss\thanks{The research was
supported by the Hungarian National
Foundation for Scientific Research, Grant NN 114614.}, \\
Stefano Marcugini\thanks{The research was supported by the Italian MIUR
(progetto 40\% ``Strutture Geometriche, Combinatoria e loro Applicazioni''),
and by GNSAGA (INDAM).},
 and Fernanda Pambianco$^\ddag$}

\maketitle

\begin{abstract}
Some constructions and bounds on the sizes of semiovals contained in  the Hermitian curve are given. A construction of an infinite family  of $2$-blocking sets of the Hermitian curve is also presented.
\end{abstract}

\section{Introduction}
Let $\Pi _q$ be a finite projective plane of order $q$
and let $\mathrm{PG}(2,q)$ denote the Desarguesian projective plane over the 
finite field of $q$ elements, $\mathbb{F}_q$. A {\it semioval}
$\mathcal{S}$ in $\Pi _q$ is a non-empty pointset  with the property
that for every point $P \in \mathcal{S}$ there exists a unique line $t_P$ such
that $\mathcal{S}\cap t_P= \{P\}$. This line is called the tangent line to $\mathcal{S}$ 
at $P$. A {\it blocking semioval} 
is a semioval $\mathcal{S}$ such that each line of $\mathrm{PG}(2,q)$
contains at least one point of $\mathcal{S}$ and at least one point outside $\mathcal{S}$. A blocking semioval existing in every projective plane of order $q>2$ is the vertexless triangle, the set of points formed by the union of three
non-concurrent lines with the points of intersections removed.

The classical examples of semiovals arise from polarities (ovals and unitals),
and from the theory of blocking sets. The semiovals
are interesting objects in their own right, but the study of semiovals is also
motivated by their applications to cryptography. Batten  constructed in
\cite{Batten2000} an effective message sending scenario which uses determining
sets. She showed that blocking semiovals are a particular type of determining
sets in projective planes.

It is known that if $\mathcal{S}$ is a semioval in $\Pi _q$ then 
$q+1\leq |\mathcal{S} | \leq
q\sqrt{q}+1$ and both bounds are sharp \cite{Hubaut1970,Thas1974}; the
extremes occur when $\mathcal{S}$ is an oval or a 
unital. In particular in $\mathrm{PG}(2,q)$ a conic has $q+1$ points
and if $q$ is a square then a Hermitian curve has $q\sqrt{q}+1$ points. 
A survey on results about semiovals
can be found in \cite{Kiss2007}.

In the last years the interest and research on the fundamental problem of
determining the spectrum of the values for which there exists a given
subconfiguration of points in $\mathrm{PG}(n,q)$ have increased considerably
(see for example 
\cite{BDFMP2012,Bartoli2014,BMP2013b,BE1999,CS2012,DFMP2009,HS2001,KMP2010,MMP2005,MMP2007,PS2008}).
For $q\leq 9$, $q$ odd, the spectrum of sizes of semiovals was determined 
by Lisonek \cite{Lisonek1994} by exhaustive computer search.
Kiss, Marcugini, and Pambianco \cite{KMP2010} extended Lisonek's results  
to the cases $q=11$ and $13.$ 

There are many known constructions and theoretical results about
semiovals, in particular those that either contain large collinear subsets 
in which case their size is close to the lower bound, or their size is close
to the upper bound. In the latter case 
Kiss, Marcugini and Pambianco \cite{KMP2010} constructed semiovals by 
careful deletion of points from a unital. If $q$ is an odd square then
they gave explicit examples of
semiovals of size $k$ for all $k$ satisfying the inequalities
$q(\sqrt{q}+1)/2\leq k\leq q\sqrt{q}+1$ and they proved existence 
if $q(\lceil4\log q\rceil +1)\leq k\leq q\sqrt{q}+1$ holds. The unital 
they started from was originally constructed by Sz\H onyi \cite{SZ1992}. 
It is a pencil of superosculating conics which is also a minimal blocking set. Later on, Dover and Mellinger gave the complete characterization of
semiovals from unions of conics \cite{DM2011}.

Our goal in this paper is to give similar constructions and estimates on 
the sizes of semiovals coming from the classical unital, the Hermitian curve. 
The main result is an explicit construction of semiovals of size $k$ for all $k$ 
satisfying the inequalities 
$2q\sqrt[4]{q}+4q-2\sqrt[4]{q^3}-3\sqrt{q}+1\leq k\leq q\sqrt{q}+1$ 
if $q=s^4$ and odd (see Corollary \ref{best}).   
We also present explicit examples for other values of $q$ and prove the existence existence of semiovals of size k for $q$ odd if 
$(q-\sqrt{q}+1)\left\lceil \frac{4(\sqrt{q}+1)}{\sqrt{q}-1}\log q\right\rceil \leq k\leq q\sqrt{q}+1$. 
If $q$ is large enough, then this gives a slight improvement on the previously known  
bound for almost all $q.$ Our main tools  are the application of proper blocking sets of 
the Hermitian curve constructed by Blokhuis at al. \cite{BJKRSSV2015} and the decomposition of the  
Hermitian curve into 
a union of $(q-\sqrt{q}+1)$-arcs, originally given by Seib \cite{S1970}, 
see in English in \cite{FHT1986}.

Finally, in the last section, we present a construction of an infinite family 
of $2$-blocking sets of the Hermitian curve.

\section{Explicit constructions of semiovals}

For the sake of convenience from now on we work on planes of order $q^2$. 
In this section we construct various examples of semiovals in $\mathrm{PG}(2,q^2)$
arising from the points of the Hermitian curve $\mathcal{H}_q$. 
This curve has $q^3+1$ points, there is a unique tangent line 
to $\mathcal{H}_q$ at each of its points and each of the other
$q^4-q^3+q^2$ lines of $\mathrm{PG}(2,q^2)$ is a $(q+1)$-secant of
$\mathcal{H}_q.$ 
A pointset $\mathcal{D}\subset \mathcal{H}_q$ is called a 
\emph{$2$-blocking set} if each $(q+1)$-secant contains at least 2 points 
of $\mathcal{D}.$ For  a detailed description of $\mathcal{H}_q$ we 
refer to \cite{HirsBook}.

For $q=2,3$ we could perform exhaustive computer search and the situation is the following. In $\mathrm{PG}(2,4)$, semiovals contained in the Hermitian curve exist only of sizes $6,8,9$: this means that Theorem \ref{th:primo} gives the complete spectrum of semiovals contained in the Hermitian curve for $q=2$. The spectrum of the sizes of semiovals contained in the Hermitian curve of $\mathrm{PG}(2,9)$ and the number of non-equivalent examples (up to collineations) are presented in Table \ref{PG2_9}.
\begin{center}
\begin{table}
\caption{Semiovals contained in the Hermitian curve of $\mathrm{PG}(2,9)$}\label{PG2_9}
\vspace*{0.1 cm}
\begin{center}
\begin{tabular}{|c||cccccccccccccc|}
\hline
Sizes&$12$&$15$&$16$&$18$&$19$&$20$&$21$&$22$&$23$&$24$&$25$&$26$&$27$&$28$\\
\hline
Non-equivalent examples &$1$&$1$&$2$&$5$&$4$&$9$&$10$&$5$&$8$&$6$&$2$&$1$&$1$&$1$\\
\hline
\end{tabular}
\end{center}
\end{table}
\end{center}

For the constructions we need the following elementary observations.

\begin{proposition}
\label{torol} 
Let $\mathcal{S}$ be a semioval. 
Suppose that the pointset $\mathcal{T}\subset \mathcal{S} $
has the property that if $\ell $ is a secant line to 
$\mathcal{S} $ then the inequality 
$|\mathcal{S}\cap \ell | \geq |\mathcal{T}\cap \ell | +2$ holds.
Then $\mathcal{S} \setminus \mathcal{T}$ is a semioval and 
$\mathcal{S}$ contains semiovals of size $k$ for all $k$ 
satisfying the inequalities 
$|\mathcal{S}\setminus \mathcal{T}|\leq k\leq \mathcal{S}$. 
\end{proposition}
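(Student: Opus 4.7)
The plan is to verify the statement in two stages: first show that the full deletion $\mathcal{S}\setminus\mathcal{T}$ is a semioval, then leverage the same argument to handle every intermediate size by deleting arbitrary subsets of $\mathcal{T}$.

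For the first stage, I would fix any point $P\in\mathcal{S}\setminus\mathcal{T}$ and exhibit a tangent line to $\mathcal{S}\setminus\mathcal{T}$ at $P$. The natural candidate is the tangent $t_P$ to the original semioval $\mathcal{S}$: since $t_P\cap\mathcal{S}=\{P\}$ and $P\notin\mathcal{T}$, we also have $t_P\cap(\mathcal{S}\setminus\mathcal{T})=\{P\}$, so $t_P$ is tangent. For uniqueness, I would take any other line $\ell$ through $P$; such a line meets $\mathcal{S}$ in at least two points (as $\ell\neq t_P$), so it is a secant of $\mathcal{S}$. The hypothesis then gives $|\mathcal{S}\cap\ell|\geq|\mathcal{T}\cap\ell|+2$, whence
\[
|(\mathcal{S}\setminus\mathcal{T})\cap\ell|=|\mathcal{S}\cap\ell|-|\mathcal{T}\cap\ell|\geq 2,
\]
so $\ell$ is still a secant of $\mathcal{S}\setminus\mathcal{T}$ and hence not tangent at $P$. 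This establishes that $\mathcal{S}\setminus\mathcal{T}$ is a semioval.

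For the second stage, observe that the hypothesis is preserved under passing to subsets: for any $\mathcal{T}'\subseteq\mathcal{T}$ and any secant line $\ell$ of $\mathcal{S}$ we have $|\mathcal{T}'\cap\ell|\leq|\mathcal{T}\cap\ell|\leq|\mathcal{S}\cap\ell|-2$. Hence the argument of the first stage applies verbatim with $\mathcal{T}'$ in place of $\mathcal{T}$, showing that $\mathcal{S}\setminus\mathcal{T}'$ is a semioval of size $|\mathcal{S}|-|\mathcal{T}'|$. Choosing $\mathcal{T}'$ of each cardinality $0,1,2,\ldots,|\mathcal{T}|$ (any subset of the prescribed size works) produces semiovals of every integer size between $|\mathcal{S}\setminus\mathcal{T}|$ and $|\mathcal{S}|$.

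There is essentially no obstacle here; the content of the statement is entirely encoded in the combinatorial inequality on secant intersections, and once that is unpacked the two verifications reduce to one-line counting. The only mild subtlety is to remember that lines through $P\in\mathcal{S}\setminus\mathcal{T}$ other than $t_P$ are automatically secants of $\mathcal{S}$ (not just arbitrary lines), which is what makes the hypothesis applicable.
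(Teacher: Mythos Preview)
Your proof is correct and follows essentially the same approach as the paper: show that the original tangent persists and that no secant becomes a tangent via the counting inequality, then obtain the intermediate sizes by removing an arbitrary subset of $\mathcal{T}$ (the paper phrases this dually as adding back points to $\mathcal{S}\setminus\mathcal{T}$, which is equivalent).
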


\proof 
Let $R$ be a point of $\mathcal{S} \setminus \mathcal{T}.$
Then the tangent to $\mathcal{S} $ at $R$ is obviously a tangent 
to $\mathcal{S} \setminus 
\mathcal{T}$ at $R.$ We have to prove  that no new
tangents appear after the deletion of points of $\mathcal{T}.$ But if a line 
$\ell $ meets $\mathcal{S} $ in more than one point, then 
$$|(\mathcal{S} \setminus \mathcal{T})\cap \ell |=|\mathcal{S}\cap \ell | -|\mathcal{T}\cap \ell |\geq 2.$$
Thus no former secant line becomes tangent line to 
$\mathcal{S} \setminus \mathcal{T}$, so it is a semioval. 

Let  
$|\mathcal{S}\setminus \mathcal{T}|=k_0.$
If $\mathcal{U}$ is any subset of $k-k_0$ points of 
$\mathcal{T}$ then $((\mathcal{S}\setminus \mathcal{T})\cup \mathcal{U})
\subset \mathcal{S}$
is a semioval of size $k.$
\endproof

\begin{corollary}
\label{2-blocking} 
Let $\mathcal{B}$ be a 2-blocking set of $\mathcal{H}_q.$ 
Then in $\mathrm{PG}(2,q^2)$ there exist semiovals of size $k$ for all $k$ 
satisfying the inequalities 
$|\mathcal{B}|\leq k\leq q^3+1.$ 
\end{corollary}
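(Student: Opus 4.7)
The plan is to apply Proposition \ref{torol} with $\mathcal{S}=\mathcal{H}_q$ and $\mathcal{T}=\mathcal{H}_q\setminus\mathcal{B}$. The Hermitian curve is a unital, hence a semioval, so it qualifies as $\mathcal{S}$, and by construction $\mathcal{S}\setminus\mathcal{T}=\mathcal{B}$, which has the correct target size $|\mathcal{B}|$ as the lower endpoint.

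The verification of the hypothesis of Proposition \ref{torol} is essentially immediate from the structure of $\mathcal{H}_q$ recalled at the start of Section~2. Every secant of the Hermitian curve is a $(q+1)$-secant, so for any secant line $\ell$ we have $|\mathcal{H}_q\cap\ell|=q+1$. Since $\mathcal{B}$ is a $2$-blocking set of $\mathcal{H}_q$, each such $\ell$ meets $\mathcal{B}$ in at least $2$ points, which gives $|\mathcal{T}\cap\ell|=|\mathcal{H}_q\cap\ell|-|\mathcal{B}\cap\ell|\leq (q+1)-2$, i.e.\ $|\mathcal{H}_q\cap\ell|\geq|\mathcal{T}\cap\ell|+2$. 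Thus the hypothesis of Proposition \ref{torol} holds.

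Applying the second conclusion of that proposition then yields a semioval inside $\mathcal{H}_q$ of every size $k$ with $|\mathcal{B}|\leq k\leq|\mathcal{H}_q|=q^3+1$, which is exactly the claim. There is no real obstacle here: the corollary is essentially a direct reinterpretation of Proposition \ref{torol} in which the defining inequality of a $2$-blocking set on $(q+1)$-secants is precisely what the proposition asks for. The only thing worth making explicit in the write-up is why tangent lines of $\mathcal{H}_q$ do not need to be checked, namely because the condition in Proposition \ref{torol} is imposed only on secant lines of $\mathcal{S}$.
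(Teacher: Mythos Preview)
Your proof is correct and follows exactly the same approach as the paper: set $\mathcal{T}=\mathcal{H}_q\setminus\mathcal{B}$ and verify the hypothesis of Proposition~\ref{torol}. The paper's proof is a one-line reference to that proposition, while you have spelled out the easy verification in more detail.
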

 
\proof 
The set $\mathcal{T}=\mathcal{H}_q \setminus \mathcal{B}$ satisfies 
the condition of Proposition \ref{torol}.
\endproof

Our first, obvious construction does not depend on the parity of $q.$
\begin{theorem}\label{th:primo}
Let $q\geq2$. In $\mathrm{PG}(2,q^2)$ 
there exists a semioval $\mathcal{S}\subset \mathcal{H}_q$ of size $k$ 
for all $k\in \{ q^3-q^2+q\} \cup [q^3-q^2+q+2,q^3+1]$.
\end{theorem}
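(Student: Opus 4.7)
The argument splits naturally into two parts: constructing a semioval of the isolated size $q^3-q^2+q$, and obtaining every size in $[q^3-q^2+q+2,q^3+1]$ via Corollary \ref{2-blocking}. The main conceptual step is choosing the right set of points to delete for the isolated size; the interval part reduces to a routine application of Corollary \ref{2-blocking} together with (for $q\geq3$) Seib's decomposition cited in the introduction.

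For $k=q^3-q^2+q$, I fix any $P\in\mathcal{H}_q$ and choose $q-1$ distinct secants $\ell_1,\ldots,\ell_{q-1}$ of $\mathcal{H}_q$ through $P$; there are $q^2$ such secants through $P$, so the choice is possible for every $q\geq2$. Because any two distinct secants through $P$ meet $\mathcal{H}_q$ only at $P$, the removed set $\mathcal{T}=\bigcup_{i=1}^{q-1}(\ell_i\cap\mathcal{H}_q)$ has size $1+(q-1)q=q^2-q+1$, and consequently $\mathcal{S}=\mathcal{H}_q\setminus\mathcal{T}$ has the correct cardinality $q^3-q^2+q$. To verify that $\mathcal{S}$ is a semioval, let $R\in\mathcal{S}$ (so $R\neq P$ and $R\notin\ell_i$ for all $i$); the tangent to $\mathcal{H}_q$ at $R$ is still tangent to $\mathcal{S}$, so I need only show that each secant $m$ of $\mathcal{H}_q$ through $R$ satisfies $|m\cap\mathcal{S}|\geq2$. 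If $m$ passes through $P$, then $m\neq\ell_i$ for any $i$, so $m\cap\ell_i=\{P\}$, and therefore the $q$ points of $m\cap\mathcal{H}_q$ distinct from $P$ avoid every $\ell_i$ and all belong to $\mathcal{S}$, giving $|m\cap\mathcal{S}|\geq q\geq2$. If $m$ does not pass through $P$, then each $\ell_i$ meets $m$ in exactly one point, contributing at most $q-1$ points to $m\cap\mathcal{T}$, and hence $|m\cap\mathcal{S}|\geq(q+1)-(q-1)=2$.

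For $k\in[q^3-q^2+q+2,q^3+1]$, by Corollary \ref{2-blocking} it suffices to exhibit a subset $\mathcal{D}\subset\mathcal{H}_q$ of size $q^2-q-1$ with $|\mathcal{D}\cap\ell|\leq q-1$ on every $(q+1)$-secant $\ell$; then $\mathcal{B}=\mathcal{H}_q\setminus\mathcal{D}$ is a 2-blocking set of size $q^3-q^2+q+2$. I take $\mathcal{D}$ to be any arc of size $q^2-q-1$ contained in $\mathcal{H}_q$: for $q=2$ this is just a single point of $\mathcal{H}_2$, and for $q\geq3$ one can choose $\mathcal{D}$ as a proper subset of one of the $(q^2-q+1)$-arcs provided by the Seib decomposition recalled in the introduction. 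Since an arc meets every line in at most two points, the bound $|\mathcal{D}\cap\ell|\leq q-1$ holds because $2\leq q-1$ for $q\geq3$, and it is trivial for $q=2$ as $|\mathcal{D}|=1$.
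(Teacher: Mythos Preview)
Your argument is correct. The first part (the isolated size $q^3-q^2+q$) is exactly the paper's construction: remove $q-1$ secants through a common point $P$ and observe that every other secant loses at most $q-1$ points.

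For the interval $[q^3-q^2+q+2,\,q^3+1]$ you take a genuinely different route. The paper stays with the same set $\mathcal{T}=\bigcup_{i=1}^{q-1}\ell_i$ and simply adds back $\overline{k}\in[2,q^2-q+1]$ points of $\mathcal{T}$ to $\mathcal{H}_q\setminus\mathcal{T}$, observing that the result is still a semioval provided no $\ell_i$ receives exactly one added point; since $\overline{k}\neq 1$ this distribution is always possible. That argument is completely elementary and uses nothing beyond the basic intersection properties of $\mathcal{H}_q$. Your approach instead removes an arc of size $q^2-q-1$ (one point when $q=2$, a subset of a Seib $(q^2-q+1)$-arc when $q\geq 3$) to obtain a $2$-blocking set and then invokes Corollary~\ref{2-blocking}. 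This is clean and correct, but it imports Seib's decomposition for a statement that the paper proves with nothing more than the configuration already built in the first step.
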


\proof
Let $P$ be a point in $\mathcal{H}_q$ and $\ell_1,\ell_2,\dots ,\ell_{q-1}$
be $(q+1)$-secants through $P.$  
Let $\mathcal{T}=\bigcup_{i=1}^{q-1}\ell_i$. Then $\mathcal{H}_q\setminus \mathcal{T}$ is a 
semioval of size $q^3-q^2+q$ because if $\ell $ is a $(q+1)$-secant of
$\mathcal{H}_q$ then we either deleted all of its points, or at
most $q-1$ of its points. Hence no former secant line becomes a tangent line to
$\mathcal{H}_q\setminus \mathcal{T},$ and there is exactly one tangent line at each point of $\mathcal{H}_q\setminus \mathcal{T},$ then $\mathcal{H}_q\setminus \mathcal{T}$ is a semioval of size $q^3-q^2+q$. Also, we can add $\overline{k}\in [2,q^2-q+1]$ points from $\mathcal{T}$ in a way that no other tangent lines are created. In fact, it is sufficient to control that in each line $\ell_1,\ell_2,\dots ,\ell_{q-1}$ the number of added points is different from $1$. This is always possible since $\overline{k}\neq 1$.
\endproof

The next two algebraic constructions work for all $q,$ but the lower bound 
of the size depend on the parity of $q.$
We use the following description of 
$\mathcal{H}_q\subset \mathrm{PG}(2,q^2)$. 
The curve $\mathcal{H}_q$ is defined by the equation
\begin{equation}\label{hermitian}
X_2X_0^q+X_2^q X_0+X_1^{q+1}=0.
\end{equation}
Let $c\in \mathbb{F}_{q^2}$ be a fixed root of the equation $c^q+c+1=0.$ 
Consider the set
\begin{equation}\label{M}
M=\{m \in \mathbb{F}_{q^2}\; |\; m^q+m=0\},
\end{equation}
then the points of $\mathcal{H}_q$ are
\begin{equation}\label{U}
\{ (1:u:cu^{q+1}+m) \mid  u \in \mathbb{F}_{q^2}, m \in M\}\cup\{(0:0:1)\}.
\end{equation}

If $q$ is an odd prime power then
let $h$ be a fixed non-square in $\mathbb{F}_q$ and consider 
$\mathbb{F}_{q^2}=\mathbb{F}_q[i]$ where
$i^2 = h.$ Then $i^q +i = 0$, $i^2 = i^{2q}$ and $i^{q+1} = -h.$ 

If $q$ is even then 
let $h\in \mathbb{F}_q$ be an element with 
$\mathrm{Tr}_{\mathbb{F}_{q}/\mathbb{F}_{2}}(h)=1$. Let $i^2 + i + h = 0$,
and consider $\mathbb{F}_{q^2}=\mathbb{F}_q[i].$
Then $i^q +i = 1$, $i^2 = i+h$ and $i^{q+1} = h.$

For all $q$ 
we represent the elements $x$ of $\mathbb{F}_{q^2}$ 
as $x = x_1 + ix_2$ where $x_1, x_2 \in  \mathbb{F}_q$.

\begin{theorem}\label{th:secondo}
If $q\geq3$ 
then in $\mathrm{PG}(2,q^2)$ 
there exists a semioval $\mathcal{S}\subset \mathcal{H}_q$ of size $k$ 
for all $k$ satisfying the inequalities 
$$q^3+1\geq k\geq
\begin{cases}
q^3-2q^2+4q+1 & \text{if  } q \text{ is even},\\
q^3-2q^2+5q-2 & \text{if } q \text{ is odd}.
\end{cases}
$$
\end{theorem}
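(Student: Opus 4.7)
The strategy parallels Theorem~\ref{th:primo}: exhibit an explicit set $\mathcal{T}\subset\mathcal{H}_q$ with complementary size equal to the claimed lower bound, satisfying the hypothesis of Proposition~\ref{torol} (so $|\mathcal{T}\cap\ell|\le q-1$ for every $(q+1)$-secant $\ell$), and then appeal to Proposition~\ref{torol} to fill in the whole interval of sizes. What is new here is that $|\mathcal{T}|\approx 2q^2$ is much larger than the $q^2-q+1$ achievable by a single pencil of $q-1$ secants, so $\mathcal{T}$ must be built ``in bulk'' from the explicit algebraic description (\ref{U}) rather than as a union of full lines.

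Using the parametrization $(1:u:cu^{q+1}+m)$ of the affine part of $\mathcal{H}_q$, each affine point carries a \emph{slope} $u\in\mathbb{F}_{q^2}$ (the projection from $P_0=(0{:}0{:}1)$) and a \emph{level} $m\in M$. Two facts, verified directly from (\ref{hermitian}), drive the construction: (i) the slopes of the $q+1$ points of any $(q+1)$-secant $\ell\not\ni P_0$ form a \emph{norm circle} $\{u\in\mathbb{F}_{q^2}:(u+b)^{q+1}=C\}$, and (ii) the level $m(u)$ along $\ell$ is an $\mathbb{F}_q$-affine function of $u+b$, so the preimage of any fixed $m_0\in M$ meets the slope circle in at most $2$ points in the generic case, and in $q+1$ points only for the degenerate ``vertical'' secants $X_2=aX_0$ (along which $m$ is the constant $a+c\,\mathrm{Tr}(a)$).

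Concretely, $\mathcal{T}$ is taken as the intersection of $\mathcal{H}_q$ with a product $A\times M'$ (i.e.\ slopes in $A\subset\mathbb{F}_{q^2}$, levels in $M'\subset M$), chosen so that $2|M'|\le q-1$ (which, by (ii), forces $|\mathcal{T}\cap\ell|\le q-1$ for every generic secant) and so that $A$ avoids having more than $q-1$ points on any of the ``concentric'' circles $\{u:u^{q+1}=\text{const}\}$ corresponding to the vertical secants of $\mathcal{H}_q$. The target sizes $2q^2-4q$ (for $q$ even) and $2q^2-5q+3$ (for $q$ odd) are then achieved by calibrating $|A|$ and $|M'|$; the parity dependence stems from the arithmetic of $\lfloor(q-1)/2\rfloor$ and, through (\ref{M}), from whether $M=\mathbb{F}_q$ or $M=i\mathbb{F}_q$, which changes the fine structure of both the degenerate secants and the ``product'' set $A\times M'$.

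The main obstacle is the simultaneous verification that the chosen $A$ satisfies the slope-intersection bound for every degenerate concentric circle while still admitting the required cardinality, and that no generic secant accidentally lands in the forbidden count $|\mathcal{T}\cap\ell|=q$. Once $\mathcal{T}$ is in place, Proposition~\ref{torol} applied with $\mathcal{S}=\mathcal{H}_q$ supplies semiovals of every size $k$ with $|\mathcal{H}_q|-|\mathcal{T}|\le k\le q^3+1$, which is precisely the stated range.
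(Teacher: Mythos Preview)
Your geometric observations are correct: for a secant $\ell:\alpha X_0+\beta X_1+X_2=0$ with $\beta\neq 0$ the slopes of $\ell\cap\mathcal{H}_q$ do form a norm circle $(u-\beta^q)^{q+1}=\alpha+\alpha^q+\beta^{q+1}$, and the level $m$ is $\mathbb{F}_q$-affine along it. But from this point on your plan diverges from the paper and, more importantly, is never carried out. You propose a single product set $\mathcal{T}=A\times M'$ and invoke the bound $|\mathcal{T}\cap\ell|\le 2|M'|$ coming from the \emph{level} side; to achieve the target size $2q^2-5q+3$ (odd $q$) this forces $|M'|=(q-1)/2$ and $|A|=4q-6$, together with the extra constraint that $A$ meets every norm circle in at most $q-1$ points. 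You neither exhibit such an $A$ nor verify the constraint, and you explicitly flag this ``simultaneous verification'' as an unresolved obstacle. As written, then, this is a strategy sketch, not a proof.

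The paper's argument is organized differently and avoids this obstacle entirely. It does \emph{not} bound $|\mathcal{T}\cap\ell|$ via the level map. Instead it takes the slope set to be the subfield $A=\mathbb{F}_q$ and the level set to be \emph{all} nonzero levels $M'=M\setminus\{0\}$; the crucial bound ``$\le 2$ on every non-vertical secant'' then comes from the \emph{slope} side: writing $u\in\mathbb{F}_q$ forces $u^{q+1}=u^2$, and substituting into the line equation yields a genuine quadratic in $u$ (Equation~(\ref{masodfok})). This already gives a semioval of size $q^3-q^2+q+1$. The remaining deficit is made up by a \emph{second} deletion, namely $q-3$ of the ``horizontal'' secants $\ell_j:X_2=(c+m_j)X_0$ through $(0{:}1{:}0)$; since each generic secant already lost at most $2$ points and now loses at most $q-3$ more from $\bigcup\ell_j$, the total is $\le q-1$ and Proposition~\ref{torol} applies. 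The parity dependence comes from $|\ell_j\cap\mathbb{F}_q|=|\{u\in\mathbb{F}_q:u^2=1\}|$ being $2$ or $1$. The intermediate sizes are obtained by adding points back, starting with a single point of $\mathcal{T}$ (which is safe because such a point already has a unique tangent coming from $\mathcal{H}_q$ and all other lines through it still meet $\mathcal{S}_0$). None of these steps is visible in your outline.
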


\proof
Take the subset
\begin{equation}\label{Ubarra}
\mathcal{T} =  \{(1:u:cu^{q+1}+m)\mid u \in \mathbb{F}_{q}, m \in M, m \neq 0\}
\end{equation}
of points of $\mathcal{H}_q.$ Note that $|\mathcal{T}|=q(q-1)$. 
The condition $u \in \mathbb{F}_{q}$ implies that $u^{q+1}=u^2,$
thus the points of $\mathcal{T}$ can be written as $(1:u:cu^2+m),$ too.   
We claim that the set 
$\mathcal{U}=\mathcal{H}_q \setminus \mathcal{T}$ is a semioval. 
Because of Proposition \ref{torol} it is enough to prove that any line of $\mathrm{PG}(2,q^2)$ contains 
at most $q-1$ points of $\mathcal{T}$.

First consider the lines throught the point $P=(0:0:1).$ The 
line $X_0=0$ is the tangent to $\mathcal{H}_q$ at $P,$ while   
a line $\ell _{\alpha }$ having equation $X_1= \alpha X_0$ 
meets $\mathcal{T}$ in points whose second coordinate is $\alpha .$
Thus $\ell _{\alpha }$ contains $q-1$ 
points of $\mathcal{T}$ if $\alpha \in \mathbb{F}_{q}$ and no 
points of $\mathcal{T}$ if $\alpha \notin \mathbb{F}_{q}.$

Now consider the other lines of the plane.
If a line $\ell $ does not contain $P$ then its equation can be written as 
$\alpha X_0 +\beta X_1 +X_2 =0$.
If the point $(1:u:cu^2+m)$ is on $\ell $ then
$\alpha +\beta u+c u^2+m =0,$
hence we get 
$$m=- c u^2- \beta u -\alpha 
\quad \mathrm{and} \quad 
m^q=- c^q u^2- \beta^q u -\alpha^q .$$ 
But $m$ satisfies the condition $m^q+m=0$ hence
\begin{equation}
\label{masodfok}
-(c^q +c)u^2 - \left( \beta^q +\beta \right) u -\alpha^q  -\alpha =0.
\end{equation}
The coefficient of $u^2$ is $1$ since $c^q+c+1=0$.
So Equation (\ref{masodfok}) is a quadratic equation on $u,$ 
it has at most two roots. 
Hence $\ell $
contains at most $2$ points of $\mathcal{T}$ and $2\leq q-1$ because $q>2.$

In the second step
we get smaller semiovals by careful deletion of points 
of $\mathcal{U}$.

Let $M=\{0,m_1,\ldots,m_{q-1}\}$ and for $j=1,2,\dots , q-1$ let $\ell_{j}$
be the line with equation $X_2=(c+m_j)X_0.$ 
Then $\ell_{j}$ passes on the point $(0:1:0).$ 
The point $\ell_{\alpha }\cap \ell_j $ has coordinates 
$(1:\alpha: c+m_j),$ thus it belongs to $\mathcal{T}$
if and only if $\alpha \in \mathbb{F}_{q}$ and $\alpha ^{q+1}=1$ hold 
simultaneously.
It happens if and only if $\alpha ^{2}=1,$ hence $\ell_{j}$ 
contains $q-1$ or $q$ points of $\mathcal{H}_q \setminus \mathcal{T}$ 
if $q$ is odd or even, respectively. 
If $q=3$ then $\mathcal{U}$ is a semioval of size $q^3-q^2+q+1=22$. Let $q>3$ and consider  $\mathcal{V}=\bigcup_{j=1}^{q-3}\ell_j$. 
Then $\mathcal{S}_0=\mathcal{U} \setminus \mathcal{V}$ is a semioval: 
in fact each $(q+1)$-secant of $\mathcal{H}_q$ contains at most two 
points of $\mathcal{T}$ and 
$q-3$ points of $\mathcal{V}$ hence it is not a tangent to $\mathcal{S}_0$. 

The size of $\mathcal{S}_0$ is $q^3-q^2+q+1-(q-1)(q-3)=q^3-2q^2+5q-2$ if $q$ 
is odd and $q^3-q^2+q+1-q(q-3)=q^3-2q^2+4q+1$ if $q$ is even.
Note that, contrary to Theorem \ref{th:primo}, we can add also one point to $\mathcal{S}_0$: in fact it is sufficient to add a point in $\mathcal{T}$ and the new set is still a semioval.
\endproof

\begin{theorem}\label{th:terzo}
If $q>2$ 
then in $\mathrm{PG}(2,q^2)$ 
there exists a semioval $\mathcal{S}\subset \mathcal{H}_q$ of size $k$ 
for all $k$ satisfying the inequalities 
$$q^3+1\geq k\geq
\begin{cases}
\frac{q^3+2q^2-q+2}{2} & \text{if  } q \text{ is odd},\\
\frac{q^3+3q^2-2q+2}{2} & \text{if } q \text{ is even}.
\end{cases}
$$
\end{theorem}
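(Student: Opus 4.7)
The plan is to generalize the construction in Theorem \ref{th:secondo} by letting the first affine coordinate $u$ range, not just over $\mathbb{F}_q$, but over a union of several additive cosets of $\mathbb{F}_q$ inside $\mathbb{F}_{q^2}$. Specifically, set
\[ s = \begin{cases} (q-1)/2 & \text{if } q \text{ is odd,} \\ (q-2)/2 & \text{if } q \text{ is even,} \end{cases} \]
choose $U \subset \mathbb{F}_{q^2}$ to be the union of any $s$ additive cosets of $\mathbb{F}_q$, and define
\[ \mathcal{T} \;=\; \bigl\{(1:u:cu^{q+1}+m) : u \in U,\ m \in M\setminus\{0\}\bigr\} \;\subset\; \mathcal{H}_q. \]
Since the parametrization $(u,m) \mapsto (1:u:cu^{q+1}+m)$ is injective, $|\mathcal{T}| = s\,q\,(q-1)$; this equals $q(q-1)^2/2$ in the odd case and $q(q-1)(q-2)/2$ in the even case, so $|\mathcal{H}_q| - |\mathcal{T}|$ reaches exactly the two claimed lower bounds.

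The key step is to verify the hypothesis of Proposition \ref{torol} with $\mathcal{S}=\mathcal{H}_q$, i.e., that $|\mathcal{T} \cap \ell| \leq q-1$ for every $(q+1)$-secant $\ell$. For lines $\ell: X_1 = \alpha X_0$ through $P = (0:0:1)$, the discussion in the proof of Theorem \ref{th:secondo} shows that $\ell$ meets $\mathcal{T}$ in $q-1$ points if $\alpha \in U$ and in $0$ points otherwise. For a secant $\ell: \alpha X_0 + \beta X_1 + X_2 = 0$ not through $P$, the affine Hermitian points of $\ell$ correspond to the $u \in \mathbb{F}_{q^2}$ solving the trace equation $(c+c^q)u^{q+1}+\mathrm{Tr}(\beta u)+\mathrm{Tr}(\alpha)=0$ that underlies (\ref{masodfok}). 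Substituting $u = a + u'$ for a fixed coset representative $a$ of $U$ and $u' \in \mathbb{F}_q$, and using $u^{q+1} = u'^2 + \mathrm{Tr}(a)\,u' + N(a)$ together with $\mathrm{Tr}(\beta u) = \mathrm{Tr}(\beta)\,u' + \mathrm{Tr}(\beta a)$, reduces this to a monic quadratic in $u' \in \mathbb{F}_q$, exactly as in (\ref{masodfok}), whose leading coefficient is $-(c+c^q)=1$. Hence at most $2$ points of $\mathcal{T}$ on $\ell$ come from each coset of $U$, and summing over the $s$ cosets gives $|\mathcal{T} \cap \ell| \leq 2s \leq q-1$.

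With the hypothesis of Proposition \ref{torol} verified, $\mathcal{H}_q \setminus \mathcal{T}$ is a semioval and every cardinality in $[q^3+1-|\mathcal{T}|,\, q^3+1]$ is realized by adjoining a suitable subset of $\mathcal{T}$, completing the proof. The main technical point is the derivation of the monic quadratic for a general coset representative $a$ (not just $a=0$, which is the case of Theorem \ref{th:secondo}) and uniformly in both characteristics; once this is in hand, the constraint $2s \leq q-1$ forces $s=\lfloor(q-1)/2\rfloor$, which is exactly what produces the parity-dependent lower bounds in the statement.
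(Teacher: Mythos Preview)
Your proposal is correct and follows essentially the same approach as the paper: the paper's sets $\mathcal{T}_{v_j}$ for $v_j\in\mathbb{F}_q$ are precisely the additive cosets $\{u+iv_j:u\in\mathbb{F}_q\}$ of $\mathbb{F}_q$ in $\mathbb{F}_{q^2}$, and the paper takes $\lfloor(q-1)/2\rfloor$ of them and derives the same monic quadratic in the $\mathbb{F}_q$-part of $u$ (separately for each parity, whereas you do it uniformly via $\mathrm{Tr}$ and $N$). The size computation and the appeal to Proposition~\ref{torol} are identical.
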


\proof
Consider $\mathbb{F}_{q^2}=\mathbb{F}_q[i].$ 
Let $v\in \mathbb{F}_{q}$ be a fixed element. 
Take the subset
\begin{equation}\label{Ubarra_v}
\mathcal{T}_v =  \{(1:u+iv:c(u+iv)^{q+1}+m)\mid u \in \mathbb{F}_{q}, m \in M, m \neq 0\}
\end{equation}
of points of $\mathcal{H}_q.$
Note that $|\mathcal{T}_v|=q(q-1)$.

If a line $\ell $ does not contain the point $P=(0:0:1)$ 
then its equation can be written as 
$\alpha X_0 +\beta X_1 +X_2 =0$.
We claim that $\ell $ contains at most two points of $\mathcal{T}_v$. 

First consider the case $q$ odd. The condition 
$u,v \in \mathbb{F}_{q}$ implies that $(u+iv)^{q+1}=u^2-h^2v^2,$ thus the 
points of $\mathcal{T}_v$ can be written as $(1:u+iv:c(u^2-h^2v^2)+m),$ too.
If the point $(1:u:c(u^2-h^2v^2)+m)$ is on $\ell $ then
$\alpha +\beta (u+iv)+c (u^2-h^2v^2)+m =0.$
Rearranging this we get 
$$m=- c u^2- \beta u -\alpha 
- \beta iv + ch^2v^2
\quad \mathrm{and} \quad 
m^q=- c^q u^2- \beta^q u -\alpha^q 
+ \beta^q iv + c^qh^2v^2.$$ 
But $m$ satisfies the condition $m^q+m=0$ hence
\begin{equation}
\label{masodfok_v}
-(c^q +c)u^2- \left( \beta^q +\beta \right) u 
-\alpha^q  -\alpha 
- (\beta + \beta^q )iv +(c^q+c)h^2v^2 =0.
\end{equation}

If $q$ is even then we get a similar equation. The condition 
$u,v \in \mathbb{F}_{q}$ now implies that $(u+iv)^{q+1}=u^2+hv^2,$ thus the 
points of $\mathcal{T}_v$ can be written as $(1:u+iv:c(u^2+hv^2)+m).$ 
If the point $(1:u:c(u^2+hv^2)+m)$ is on $\ell $ then
$$m=c u^2 + \beta u + \alpha + \beta iv + chv^2
\quad \mathrm{and} \quad 
m^q= c^q u^2 +\beta^q u +\alpha^q 
+ \beta^q iv + +\beta ^qv+ c^qhv^2.$$ 
But $m$ satisfies the condition $m^q+m=0$ hence
\begin{equation}
\label{masodfok_veven}
(c^q +c)u^2 +\left( \beta^q +\beta \right) u 
+\alpha^q  +\alpha 
+ (\beta + \beta^q )iv +\beta ^qv+ (c^q+c)hv^2 =0.
\end{equation}

The coefficient of $u^2$ in Equations (\ref{masodfok_v}) 
and (\ref{masodfok_veven}) is $1$ since $c^q+c+1=0$.
So these are quadratic equations on $u,$ each of them has at most two roots. 
Hence $\ell $ contains at most $2$ points of $\mathcal{T}_v.$ 

Let $v_1,v_2,\dots ,v_{\lfloor (q-1)/2\rfloor }$ be distinct elements of 
$\mathbb{F}_{q}$ and let
$\mathcal{T}=\bigcup_{i=j}^{\lfloor (q-1)/2\rfloor }\mathcal{T}_{v_j}$. 
We show that $\mathcal{S}_0=\mathcal{H}_q\setminus \mathcal{T}$ is a 
semioval.

Because of Proposition \ref{torol} it is enough to prove that any 
$(q+1)$-secant of $\mathcal{H}_q$ contains at most $q-1$ points of 
$\mathcal{T}$. This is obvious if a line does not contain the point 
$P$ because in this case it contains at most two points from each 
set $\mathcal{T}_{v_j}$.
Consider the lines throught the point $P=(0:0:1).$ 
The line $X_0=0$ is the tangent
to $\mathcal{H}_q$ at $P,$ while   
a line $\ell _{\alpha }$ having equation $X_1= \alpha X_0$ 
meets $\mathcal{T}_{v_j}$ in points whose second coordinate is $\alpha .$
Thus $\ell _{\alpha }$ contains $q-1$ 
points of $\mathcal{T}_{v_j}$ if $(\alpha -iv_j)\in \mathbb{F}_{q}$ and no 
points of $\mathcal{T}_{v_j}$ if $(\alpha -iv_j)\notin \mathbb{F}_{q}.$

The size of $\mathcal{S}_0$ is $q^3+1-q(q-1)^2/2$ if $q$ 
is odd and $q^3+1-q(q-1)(q-2)/2$ if $q$ is even.
Thus the theorem follows from Proposition \ref{torol}.

\endproof

If $q-1$ has suitable divisors then we can construct smaller semiovals 
than in Theorem 
\ref{th:secondo}.
We distinguish the cases $q$ even and $q$ odd. 

First let $q$ be an odd prime power.
The following lemma due to Blokhuis et al. \cite{BJKRSSV2015}
gives information on the irreducibility of a particular plane curve. 
\begin{lemma}[\!\!\cite{BJKRSSV2015}, Lemma 4.5]\label{Lemma_odd}
Let $q$ be odd. If $n_1^2\neq 2d_1 + h n_2^2$
then the algebraic curve in $\mathrm{PG}(2,q)$ defined as 
\begin{equation}\label{eq:Curve}
\mathcal{X}_{\mathrm{odd}} : 2n_1X_0^rX_2^r+2hn_2X_1X_2^{2r-1}+2d_1X_2^{2r}+X_0^{2r} - hX_1^2X_2^{2r-2} = 0
\end{equation}
is absolutely irreducible and it has genus $g=r-1$.
\end{lemma}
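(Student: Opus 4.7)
The plan is to put $\mathcal{X}_{\mathrm{odd}}$ into hyperelliptic form and then extract both absolute irreducibility and the genus from standard facts about the affine model $y^{2}=f(x)$. First I would dehomogenize by setting $X_{2}=1$, writing $x=X_{0}$ and $y=X_{1}$, so that the defining equation becomes
$$x^{2r}+2n_{1}x^{r}+2d_{1}+2hn_{2}y-hy^{2}=0.$$
Completing the square in $y$ rewrites this as
$$h(y-n_{2})^{2}=(x^{r}+n_{1})^{2}-A,\qquad A:=n_{1}^{2}-2d_{1}-hn_{2}^{2},$$
and the hypothesis of the lemma is precisely that $A\neq 0$.

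For absolute irreducibility I would treat the defining polynomial as a quadratic in $y$ with constant leading coefficient $-h\neq 0$; since this leading coefficient is a nonzero constant, there is no common factor in $x$ among the three $y$-coefficients, so reducibility in $\overline{\mathbb{F}_{q}}[x,y]$ would reduce to the discriminant being a square in $\overline{\mathbb{F}_{q}}[x]$. A direct calculation gives discriminant $4h\bigl((x^{r}+n_{1})^{2}-A\bigr)$, and over $\overline{\mathbb{F}_{q}}$ the second factor decomposes as the product of the two polynomials $x^{r}+n_{1}\pm\sqrt{A}$, whose difference is the nonzero constant $\mp 2\sqrt{A}$ so that they are coprime. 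Assuming $p\nmid r$, each factor is squarefree (its $r$ roots, the $r$-th roots of $-n_{1}\pm\sqrt{A}$, are distinct), so their coprime product cannot be a square in $\overline{\mathbb{F}_{q}}[x]$, and absolute irreducibility follows.

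For the genus, after the scaling $Y=\sqrt{h}\,(y-n_{2})$ (permitted over $\overline{\mathbb{F}_{q}}$) the curve takes the standard hyperelliptic form $Y^{2}=f(x)$ with $f(x)=(x^{r}+n_{1})^{2}-A$ squarefree of degree $2r$. The smooth projective model of $Y^{2}=f(x)$ with $\deg f=2r$ and $f$ squarefree has genus $\lfloor (2r-1)/2\rfloor=r-1$, which is exactly the claimed value; equivalently, applying Riemann--Hurwitz to the degree-$2$ map $(x,y)\mapsto x$ and noting that the $2r$ finite ramification points are all simple and that there is no ramification at infinity gives $2g-2=-4+2r$. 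The step I expect to be the main obstacle is handling the characteristic case $p\mid r$ cleanly: then each factor $x^{r}+c$ is a $p^{a}$-th power of a smaller polynomial with $p\nmid r/p^{a}$, and one must verify, using that $p^{a}$ is odd because $q$ is odd and using the coprimality of the two factors, that the full discriminant still fails to be a square in $\overline{\mathbb{F}_{q}}[x]$ and that $f$ still contributes the correct ramification to the Riemann--Hurwitz count. Once that case is handled, both irreducibility and the genus formula go through as above.
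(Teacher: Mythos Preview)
The paper does not prove this lemma; it is quoted verbatim from \cite{BJKRSSV2015}. Your hyperelliptic reduction to $Y^{2}=(x^{r}+n_{1})^{2}-A$ is exactly the natural route and almost certainly the one taken in the cited source, so the overall strategy is fine.

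Two remarks. First, your concern about $p\mid r$ is moot in the intended setting: in the application (Theorem~\ref{Th:Main}) one assumes $r\mid q-1$, and since $p\mid q$ this forces $\gcd(r,p)=1$, so the derivative argument for squarefreeness goes through unconditionally.

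Second, there is a genuine edge case you pass over. You assert that each factor $x^{r}+n_{1}\pm\sqrt{A}$ has $r$ distinct roots, but this needs the constant term to be nonzero. If $2d_{1}+hn_{2}^{2}=0$ (compatible with the hypothesis $A\neq 0$ as long as $n_{1}\neq 0$) then $n_{1}^{2}=A$, one factor degenerates to $x^{r}$, and $f(x)=x^{r}(x^{r}+2n_{1})$ is not squarefree for $r\geq 2$. Absolute irreducibility still holds, since the two coprime factors cannot both be squares ($x^{r}+2n_{1}$ is squarefree of positive degree), but the genus of the smooth model drops strictly below $r-1$: already for $r=2$ the substitution $Z=Y/x$ gives the genus-$0$ conic $Z^{2}=x^{2}+2n_{1}$, not a curve of genus $1$. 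So either the statement transcribed here should read $g\le r-1$ (which is all the Hasse--Weil step actually uses, and which matches the phrasing of the companion Lemma~\ref{Lemma_even}), or the original in \cite{BJKRSSV2015} carries an extra hypothesis ruling out $2d_{1}+hn_{2}^{2}=0$. Either way, you should flag this case rather than silently assume $f$ is squarefree.
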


If $q$ is even then a similar lemma holds.
\begin{lemma}[\!\!\cite{BJKRSSV2015}, page 14]\label{Lemma_even}
Let $q$ be even. If $n_1^2\neq n_1n_2 + h n_2^2 +d_2 \neq 0$
then the algebraic curve in $\mathrm{PG}(2,q)$ defined as 
\begin{equation}\label{eq:Curve_qeven}
\mathcal{X}_{\mathrm{even}} : (n_1 +n_2)X_1X_2^{2r-1} +n_2X_0^rX_2^r +d_2X_2^{2r} +X_0^{2r} +X_0^rX_1X_2^{r-1} +hX_1^2X_2^{2r-2} = 0
\end{equation}
is absolutely irreducible and it has genus $g\leq r-1$.
\end{lemma}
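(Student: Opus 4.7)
My plan is to view the curve $\mathcal{X}_{\mathrm{even}}$ as an Artin--Schreier double cover of $\mathbb{P}^1$ and extract both the absolute irreducibility and the genus bound from the standard invariants of such covers in characteristic two.

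First, I would dehomogenize by setting $X_2=1$, $X_0=x$, $X_1=y$; the defining equation becomes
$$hy^2 + (x^r+n_1+n_2)\,y + (x^{2r}+n_2 x^r + d_2) = 0,$$
a separable quadratic in $y$ since $h\neq 0$. The substitution $y = h^{-1}(x^r+n_1+n_2)\,z$ clears the linear coefficient in $z$ and puts the equation in Artin--Schreier form
$$z^2+z \,=\, A(x), \qquad A(x) \,:=\, \frac{h\bigl(x^{2r}+n_2 x^r + d_2\bigr)}{(x^r+n_1+n_2)^2}.$$
The curve is birational to the smooth projective model of this cover. Since $A(x)\to h$ as $x\to\infty$, the point at infinity is unramified, and the only possible ramification comes from the roots of $x^r+n_1+n_2$.

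For absolute irreducibility it suffices to show that $A(x)$ is not of the form $w^2+w$ for any $w\in\overline{\mathbb{F}_q}(x)$. I would fix a root $\xi$ of $x^r+n_1+n_2$ (so $\xi^r = n_1+n_2$) and expand $A$ in a local parameter at $\xi$, separating the case $r$ odd (so $\xi$ is a simple zero of the denominator) from the case $r$ even (where $\xi$ has multiplicity $2^{v_2(r)}$ and the analysis must be carried out in an appropriate power of the local parameter). The two hypotheses $n_1n_2+hn_2^2+d_2\neq 0$ and $n_1^2\neq n_1n_2+hn_2^2+d_2$ are designed precisely to guarantee that two consecutive odd-order Laurent coefficients of $A$ at $\xi$ are nonzero: the leading one (ruling out cancellation of the pole altogether) and the coefficient controlling whether the pole can be absorbed by an Artin--Schreier substitution $z\mapsto z+c\,t^{-m}$. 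With both nonzero, $A\notin\{w^2+w : w\in\overline{\mathbb{F}_q}(x)\}$, and $\mathcal{X}_{\mathrm{even}}$ is absolutely irreducible.

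For the genus bound I would apply the Riemann--Hurwitz formula for an Artin--Schreier double cover of $\mathbb{P}^1$ in characteristic two: if after reducing $A$ to standard form its poles are $P_1,\dots,P_s$ with odd pole orders $\tilde d_1,\dots,\tilde d_s$, then $2g-2 = -4 + \sum_{i=1}^s(\tilde d_i+1)$. Since $\infty$ is unramified and the poles of $A$ lie over the $r/2^{v_2(r)}$ distinct roots of $x^r+n_1+n_2$, each with reduced pole order at most $2^{v_2(r)+1}-1$, one checks $\sum_i(\tilde d_i+1)\le 2r$, yielding $g\le r-1$. The main obstacle is the Laurent-coefficient bookkeeping in the irreducibility step: identifying which two coefficients must be nonzero and matching them to the explicit polynomials appearing in the hypothesis is a routine but error-prone calculation in characteristic two, complicated further by the parity case split on $r$ and by the need to reduce $(x^r+n_1+n_2)^{-2}$ to second order near $\xi$.
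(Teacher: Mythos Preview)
The paper does not prove this lemma; it is quoted verbatim from \cite{BJKRSSV2015} (page~14) and used as a black box in the proof of Theorem~\ref{Th:Main}. There is therefore no in-paper argument to compare your proposal against.

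That said, your Artin--Schreier strategy is the natural one and is essentially correct. After your substitution the curve is birational to $z^2+z=A(x)$ with $A(x)=h(x^{2r}+n_2x^r+d_2)/(x^r+n_1+n_2)^2$, and both irreducibility and the genus follow from the standard theory once one shows that some pole of $A$ survives Artin--Schreier reduction. Two points deserve sharpening. First, in the intended application $r\mid q-1$ with $q$ even, so $r$ is automatically odd and your case split on the parity of $r$ is unnecessary; each root $\xi$ of $x^r+n_1+n_2$ is then simple (when $n_1+n_2\neq 0$) and $A$ has a double pole there. Second, your description of the role of the two hypotheses is not quite accurate. The local expansion at $\xi$ gives
\[
a_{-2}=\frac{h(n_1^2+n_1n_2+d_2)}{(r\xi^{r-1})^2},\qquad a_{-1}=\frac{hn_2}{r\xi^{r-1}},
\]
and the pole survives the reduction $z\mapsto z+\sqrt{a_{-2}}/t$ precisely when $a_{-1}^2\neq a_{-2}$, i.e.\ when $n_1^2\neq n_1n_2+hn_2^2+d_2$. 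Thus it is this single inequality that drives irreducibility; the second hypothesis $n_1n_2+hn_2^2+d_2\neq 0$ does not control a separate ``odd-order Laurent coefficient'' in the way you describe. With a surviving simple pole at each of the $r$ roots, Riemann--Hurwitz gives $2g-2=-4+2r$, hence $g=r-1$ exactly (and the degenerate case $n_1+n_2=0$ yields $g=(r-1)/2$), so the stated bound $g\le r-1$ holds.
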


Using these lemmas, Blokhuis et al. \cite{BJKRSSV2015} constructed a 
blocking set of $\mathcal{H}_q.$ With a slight modification of their proof 
we can prove the existence of a family of 
semiovals contained in the Hermitian curve.

\begin{theorem}\label{Th:Main}
Let $q$ be a prime power and $r$ be a divisor of 
$q-1$ for which $r <\frac{\sqrt{q}}{2}$ holds. Then in
$\mathrm{PG}(2,q^2)$ there exists a semioval 
$\mathcal{S}\subset \mathcal{H}_q$ of size $k$ 
for all $k$ satisfying the inequalities  
\begin{equation}\label{eq:k}
\left(q+\frac{(q-1)q}{r}\right)(q-1) +q^2+1\leq k \leq q^3+1.
\end{equation}
\end{theorem}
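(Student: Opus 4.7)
\bigskip

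The strategy is to apply Corollary \ref{2-blocking}: it suffices to construct a $2$-blocking set $\mathcal{B}\subseteq \mathcal{H}_q$ with $|\mathcal{B}|\leq \left(q+\frac{(q-1)q}{r}\right)(q-1)+q^2+1$, and the full spectrum (\ref{eq:k}) of semioval sizes then follows automatically. My plan is to start from the $1$-blocking set of $\mathcal{H}_q$ constructed by Blokhuis et al.\ in \cite{BJKRSSV2015}, whose size is exactly the bulk term $\left(q+\frac{(q-1)q}{r}\right)(q-1)$, and to adjoin at most $q^2+1$ further points of $\mathcal{H}_q$ so that every $(q+1)$-secant is met at least twice.

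First I would recall the construction of \cite{BJKRSSV2015}. Using the parametrization (\ref{U}), one fixes a subgroup $H\leq \mathbb{F}_q^*$ of index $r$ (so $|H|=(q-1)/r$), and takes $\mathcal{B}_0\subseteq \mathcal{H}_q$ to be the union of appropriate $H$-orbits on the parameter $u$. The proof that $\mathcal{B}_0$ is blocking translates, for a generic $(q+1)$-secant $\ell$ with equation $\alpha X_0+\beta X_1+X_2=0$, into the existence of an $\mathbb{F}_q$-rational point on the plane curve $\mathcal{X}_{\mathrm{odd}}$ of Lemma \ref{Lemma_odd} (resp.\ $\mathcal{X}_{\mathrm{even}}$ of Lemma \ref{Lemma_even}), whose coefficients $(n_1,n_2,d_1)$, resp.\ $(n_1,n_2,d_2)$, are explicit polynomials in $(\alpha ,\beta )$. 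Whenever the non-degeneracy hypothesis of the lemma holds, the curve is absolutely irreducible of genus $g\leq r-1$, and the Hasse--Weil bound supplies at least $q+1-2(r-1)\sqrt{q}$ rational points.

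The key observation driving the ``slight modification'' is that under the hypothesis $r<\sqrt{q}/2$ one checks
$$q+1-2(r-1)\sqrt{q}>q+1-(\sqrt{q}-2)\sqrt{q}=1+2\sqrt{q}>2,$$
so in the generic case each line $\ell$ already contains at least two points of $\mathcal{B}_0$; the \cite{BJKRSSV2015} argument therefore produces $2$-blocking ``for free'' on the non-degenerate locus. It remains to enforce $2$-blocking on the exceptional lines: those for which the parameter $(n_1,n_2,d_1)$ (resp.\ $(n_1,n_2,d_2)$) falls into the degeneracy locus $n_1^2=2d_1+hn_2^2$ (resp.\ $n_1n_2+hn_2^2+d_2=0$) of Lemma \ref{Lemma_odd} / \ref{Lemma_even}, together with the pencil through the distinguished point $P=(0:0:1)$. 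These exceptional lines form a low-dimensional family, and I would cover them by adjoining all points of $\mathcal{H}_q$ on a suitable pencil or Baer-type substructure totalling at most $q^2+1$ points of $\mathcal{H}_q$. The main obstacle will be this bookkeeping step: giving a geometric description of the exceptional-line locus, checking that a single pencil of at most $q^2+1$ points of $\mathcal{H}_q$ supplies a second intersection on each exceptional $(q+1)$-secant, and confirming that no line through the centre of that pencil is left with only one point of the augmented set. Once this is carried out, $\mathcal{B}:=\mathcal{B}_0\cup (\text{augmentation})$ is a $2$-blocking set of size at most $\left(q+\frac{(q-1)q}{r}\right)(q-1)+q^2+1$, and the theorem follows from Corollary \ref{2-blocking}.
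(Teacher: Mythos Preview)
Your overall strategy---start from the Blokhuis et al.\ set, show via Hasse--Weil that generic secants are already $2$-blocked, then repair the exceptional secants and invoke Corollary~\ref{2-blocking}---is exactly the paper's strategy. However, several of your concrete claims are off and would derail the bookkeeping. First, the set $\mathcal{B}$ of \cite{BJKRSSV2015} (namely $\{(x,y)\in\mathcal{U}:y=u^r+iv,\ u,v\in\mathbb{F}_q\}\cup\{(1{:}0{:}0)\}$ in the affine model $X^q+X+Y^{q+1}=0$) has size $\bigl(q+\frac{(q-1)q}{r}\bigr)q+1$, not $\bigl(q+\frac{(q-1)q}{r}\bigr)(q-1)$; the latter expression only appears after the augmentation, as a rearrangement. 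Second, the Hasse--Weil bound gives $\geq q+1-2(r-1)\sqrt{q}$ rational points on $\mathcal{X}$, but the map from such points to points of $\ell\cap\mathcal{B}$ is $r$-to-$1$ (the points $(\epsilon^i u,v)$ for $\epsilon$ a primitive $r$-th root of unity all yield the same $u^r+iv$), so the relevant lower bound is $\frac{q-(2r-2)\sqrt{q}}{r}$; this is still $>2$ under $r<\sqrt{q}/2$, but your computation as written omits the division.

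Third, and most importantly, you have misidentified the exceptional locus. The degeneracy condition $n_1^2=2d_1+hn_2^2$ (resp.\ $n_1^2=n_1n_2+hn_2^2+d_2$) in Lemmas~\ref{Lemma_odd}/\ref{Lemma_even} is \emph{equivalent} to $n^{q+1}=d^q+d$, i.e.\ to $\ell$ being tangent to $\mathcal{H}_q$; such lines are irrelevant for $2$-blocking. The genuinely problematic secants are the \emph{horizontal} lines $Y=y_0$ (all passing through $(1{:}0{:}0)$): those with $y_0$ of the form $u^r+iv$ contain $q$ affine points of $\mathcal{B}$, while the remaining $q^2-\bigl(q+\frac{(q-1)q}{r}\bigr)$ horizontal lines contain only the point $(1{:}0{:}0)$. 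Adding one extra point of $\mathcal{H}_q$ on each of the latter yields a $2$-blocking set $\mathcal{S}_0$ of size
\[
\Bigl(q+\tfrac{(q-1)q}{r}\Bigr)q+1+\Bigl[q^2-\Bigl(q+\tfrac{(q-1)q}{r}\Bigr)\Bigr]=\Bigl(q+\tfrac{(q-1)q}{r}\Bigr)(q-1)+q^2+1,
\]
and Corollary~\ref{2-blocking} finishes. There is no need for a pencil or Baer substructure; the augmentation is just one point per bad horizontal line.
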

\proof
The equation of $\mathcal{H}_q$ 
is the same
$$X_2X_0^q+X_2^q X_0+X_1^{q+1}=0$$
as in the previous proof, but we use another 
description of its points.

The point $X_{\infty}=(1:0:0)$ is on $\mathcal{H}_q$ and the line $\ell_{\infty} \, :\, X_2=0$ is the tangent to
$\mathcal{H}_q$ at $X_{\infty}.$  Choose $\ell_{\infty}$ as line at infinity and consider 
$\mathrm{PG}(2,q^2)$ as the union of $\mathrm{AG}(2,q^2)$ and $\ell_{\infty}.$
Let $\mathcal{U}$ be the affine part of $\mathcal{H}_q.$ Then the equation of $\mathcal{U}$ is  
\begin{equation}\label{eq:HC}
X^q + X + Y^{q+1} = 0.
\end{equation}
The tangent to $\mathcal{H}_q$ at the affine point $(a,b)$ has equation $X=-b^qY-a^q.$ Thus 
a non-horizontal affine line $X=nY+d$ is a tangent to $\mathcal{H}_q$ if and only if $n^{q+1}\neq d^q + d.$

Take the following pointset  
\begin{equation}\label{eq:B}
\mathcal{B} = \{(x, y) \in  \mathcal{U} \mid y = u^r + iv, u, v \in \mathbb{F}_q \} \cup  \{(1 : 0 : 0)\}\subset \mathcal{H}_q.
\end{equation}
For all $u, v \in \mathbb{F}_q$ the horizontal affine line with equation $Y = u^r + iv$ 
contains $q$ affine points of $\mathcal{B}$, 
the other horizontal lines $Y= y_0\neq y = u^r + iv$ does not contain any affine point of $\mathcal{B}.$
There is exactly one point of the line $\ell_{\infty}$ in $\mathcal{B}.$
Thus $\mathcal{B}$ consists of $\left( q+\frac{(q-1)q}{r}\right) q +1$ points.   

First consider the case $q$ odd.
Let $\ell$ be a non-horizontal affine line with equation $X = nY + d$. Let $n=n_1+in_2$ and 
$d=d_1+id_2$ where $n_1,n_2,d_1,d_2 \in \mathbb{F}_q$ Then
$$\ell \cap \mathcal{B} = \{ (x, y) \in  \mathcal{U} \mid \, y = u^r + iv, \, 2n_1u^r + 2hn_2v + 2c_1 + u^{2r} - hv^2 = 0, u, v  \in \mathbb{F}_q\}.$$
Thus affine points $(x,u^r+iv)$ of $\mathcal{B}$ on the line $\ell $ correspond to points of the 
curve having affine equation
$$\mathcal{A}_{\mathrm{odd}}: \quad 2n_1U^r + 2hn_2V + 2d_1 + U^{2r} - hV^2 = 0.$$
This curve is the affine part of the curve $\mathcal{X}_{\mathrm{odd}}$ in $\mathrm{PG}(2,q).$
Suppose that $\ell$ is not a tangent line to $\mathcal{U}$. Then
$n^{q+1} \neq d^q +d.$ It holds if and only if $n_1^2\neq 2d_1 + h n_2^2.$
So in this case 
by Lemma \ref{Lemma_odd}, $\mathcal{X}_{\mathrm{odd}}$ is absolutely irreducible and its genus is equal to $r-1$.

If $q$ is even and
$\ell$ is a non-horizontal affine line with equation $X = nY + d,$ $n=n_1+in_2$ and 
$d=d_1+id_2$ where $n_1,n_2,d_1,d_2 \in \mathbb{F}_q$ then
$$\ell \cap \mathcal{B} = \{ (x, y) \in  \mathcal{U} \mid \, y = u^r + iv,  \, 
n_2u^r + (n_1+n_2)v + d_2 + u^{2r} +u^rv + hv^2 = 0, u, v  \in \mathbb{F}_q\}.$$
Thus affine points $(x,u^r+iv)$ of $\mathcal{B}$ on the line $\ell $ correspond to points of the 
curve having affine equation
$$\mathcal{A}_{\mathrm{even}}: \quad n_2U^r + (n_1+ n_2)V + d_2 + U^{2r} +U^rV + hV^2 = 0.$$
This curve is the affine part of the curve $\mathcal{X}_{\mathrm{even}}$ in $\mathrm{PG}(2,q).$
Suppose that $\ell$ is not a tangent line to $\mathcal{U}$. Then
$n^{q+1} \neq d^q +d.$ It holds if and only if 
$n_1^2\neq n_1n_2 + h n_2^2 +d_2.$
So in this case 
by Lemma \ref{Lemma_even}, $\mathcal{X}_{\mathrm{even}}$ is absolutely irreducible and its genus is at most to $r-1$.

For all $q,$ the Hasse-Weil bound implies that each of the curves $\mathcal{X}_{\mathrm{odd}}$ 
and $\mathcal{X}_{\mathrm{even}}$ has at least
$q+1-2(r-1)\sqrt{q}$ points in $\mathrm{PG}(2,q).$
Both curves have a unique point at infinity, $(0:1:0).$
Now consider the points of the curves $\mathcal{A}_{\mathrm{odd}}$ and
$\mathcal{A}_{\mathrm{even}}.$
The line $U=0$ contains at most two points of these curves.
If $u\neq 0$, $\xi$ is an $r$-th root of the unity and $(u,v)$ is on $\mathcal{A}_{\mathrm{odd}}$ or 
on $\mathcal{A}_{\mathrm{even}}$ then
the point $(\xi u,v)$ is also on $\mathcal{A}_{\mathrm{odd}}$ or 
on $\mathcal{A}_{\mathrm{even}}$, respectively.
But if $\epsilon $ is a primitive $r$-th root of the unity, then for $i=0,1,\dots r-1$ 
the points $(u,v)$ and $(\epsilon ^i u,v)$ of the curves  
give the same affine point $(x,y)=(x,u^r+iv)$ of $\ell \cap \mathcal{B}$. 
Hence $\ell \cap \mathcal{B}$ contains at least 
$$\frac{(q +1 -(2r -2)\sqrt{q})-1}{r}$$ points, and 
by the assumption on $r$ this number is greater than $2.$ 
Thus the set $\ell \cap \mathcal{B}$ contains at least two points.

All horizontal lines pass through $(0:1:0)\in \mathcal{B}$. Since no horizontal line is a tangent line to $\mathcal{U}$, 
it is sufficient to add one point for each of the lines with equation $Y= y_0\neq y = u^r + iv$ 
to extend $\mathcal{B}$ to a 2-blocking set of $\mathcal{H}_q$.
Let $\mathcal{S}_0$ be the set obtained from $\mathcal{B}$ by adding these extra points. Then the size of $\mathcal{S}_0$ is 
$$\underbrace{\left(q+\frac{(q-1)q}{r}\right)q +1}_{|\mathcal{B}|}+\underbrace{\left[ q^2-\left(q+\frac{(q-1)q}{r}\right)\right]}_{\textrm{extra points on horizontal lines}}=\left(q+\frac{(q-1)q}{r}\right)(q-1) +q^2+1.$$
Now the theorem follows from Corollary \ref{2-blocking}.
\endproof

The lower bound on the size of the semioval in Theorem \ref{Th:Main} depends on  
the divisor $r$ of $q-1.$ The greater $r$ the smaller  the size of $\mathcal{S}_0$, but the method works only if $r<\sqrt{q}/2$ holds.
Note that if we take $r=1$ in Theorem \ref{Th:Main} then the semioval obtained is just the Hermitian curve itself. 
If $1<r$ then the condition $r<\sqrt{q}/2$ implies $q>16.$

Also, if $q$ is even or odd, then sometimes $q-1=p$ or $q-1=2p$, respectively, where $p$ is a prime number. Hence there is no $r\neq 1$ 
(e.g. in the cases $q=32,128$) or the best possible value is $r=2$ (e.g. in the cases $q=23,27$). 
In the case $r=2$ Theorem \ref{Th:Main} gives 
semiovals of sizes 
$$\frac{q^3+2q^2-q+2}{2}\leq k \leq q^3+1.$$
This is the same as the result of Theorem \ref{th:terzo}.

The situation is much better if $q$ is an odd square. In the case $q=s^2,$ $s$ odd,
one can always choose $r=(s-1)/2$: this is the greatest possible divisor of $q-1$ 
satisfying the condition $r<\sqrt{q}/2.$ In this case Theorem \ref{Th:Main} has the following 
\begin{corollary}
\label{best}
Let $q$ be an odd square. Then in
$\mathrm{PG}(2,q^2)$ there exists a semioval $\mathcal{S}\subset \mathcal{H}_q$ of size $k$ 
for all $k$ satisfying the inequalities 
$$2q^2\sqrt{q}+4q^2-2q\sqrt{q}-3q+1\leq k \leq q^3+1.$$
\end{corollary}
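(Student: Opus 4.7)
The plan is to derive Corollary \ref{best} as an immediate consequence of Theorem \ref{Th:Main}, by choosing the divisor $r$ of $q-1$ as large as possible subject to $r < \sqrt{q}/2$. Since the lower bound in (\ref{eq:k}) is a strictly decreasing function of $r$, taking $r$ maximal will produce the best size bound.

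First I would observe that when $q = s^2$ with $s$ odd, one has $q - 1 = (s-1)(s+1)$, and since $s-1$ is even, the integer $r = (s-1)/2$ divides $q-1$. Next I would verify the hypothesis of Theorem \ref{Th:Main}: $r = (s-1)/2 < s/2 = \sqrt{q}/2$, which is immediate. This choice is the largest divisor of $q-1$ admissible under the constraint, as claimed in the discussion preceding the corollary.

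The remaining step is a routine algebraic substitution into the lower bound $\left(q + \tfrac{(q-1)q}{r}\right)(q-1) + q^2 + 1$ of Theorem \ref{Th:Main}. With $q = s^2$ and $r = (s-1)/2$ I would compute
\begin{equation*}
\frac{(q-1)q}{r} = \frac{2(s-1)(s+1)s^2}{s-1} = 2s^2(s+1),
\end{equation*}
so that $q + \tfrac{(q-1)q}{r} = 2s^3 + 3s^2$. Multiplying by $q - 1 = s^2 - 1$ and adding $q^2 + 1 = s^4 + 1$ gives $2s^5 + 4s^4 - 2s^3 - 3s^2 + 1$. Rewriting each monomial in terms of $q = s^2$ and $\sqrt{q} = s$ yields exactly $2q^2\sqrt{q} + 4q^2 - 2q\sqrt{q} - 3q + 1$, which is the lower bound stated in the corollary.

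Since the work is essentially a direct invocation of Theorem \ref{Th:Main} with a specific parameter choice, there is no substantive obstacle; the only care needed is to check that $(s-1)/2$ is indeed a divisor of $q-1$ (using the parity of $s$) and to perform the algebraic simplification correctly. Theorem \ref{Th:Main} then delivers semiovals of every size $k$ in the claimed range.
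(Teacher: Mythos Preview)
Your proof is correct and follows exactly the paper's approach: the paper also derives the corollary directly from Theorem \ref{Th:Main} by taking $r=(s-1)/2$ when $q=s^2$ with $s$ odd, noting this is the largest admissible divisor of $q-1$ under the constraint $r<\sqrt{q}/2$. Your verification of the divisibility condition and the algebraic simplification are both accurate.
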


If $q\geq 49$, then this extend the spectrum of sizes of constructed semiovals
in $\mathrm{PG}(2,q^2)$ significantly, because previously the corresponding lower bound
was $(q^3+q^2)/2$  (see \cite{KMP2010}).

If $q=s^t,$ $s$ odd and $t>2$ then $r=s-1$ is always a possible choice. In this case
the size of the smallest semioval we get is roughly $q^2\sqrt[t]{q^{t-1}}$.

\section{The proof of existence of smaller semiovals}

If $q$ is odd then
we can prove the existence of much smaller semiovals using a theorem about
dominating sets of bipartite graphs. Let $A$ and $B$ be the two vertex subsets
of a bipartite graph. We say that a vertex $v\in B$ dominates the subset
$S\subset A,$ if for any $s\in S$ there is an edge between $v$ and $s.$ A
subset $B'\subset B$ is a dominating set, if for any $a\in A$ there exists
$b'\in B'$ which dominates $a.$ 
The following lemma is due to S. K. Stein, the proof can be found e.g. in
\cite{gsz}.

\begin{lemma}
\label{veletlen}
Let $A$ and $B$ be the two vertex subsets of a bipartite graph. Denote by $d$
the minimum degree in $A.$ If $A$ has at least two elements, then there is a
set $B'\subset B$ dominating the vertices of $A$ with
$$|B'|\leq \left\lceil |B|\frac{\log (|A|)}{d}\right\rceil, $$
where $\log $ denotes natural base logarithm.
\end{lemma}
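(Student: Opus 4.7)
The plan is to prove the lemma by a greedy construction of $B'$. Initialize $B'=\emptyset$ and $A_0=A$; at each step $k+1$ pick the vertex $b_{k+1}\in B$ that maximizes $|N(b)\cap A_k|$ among the currently undominated vertices $A_k$, add it to $B'$, and set $A_{k+1}=A_k\setminus N(b_{k+1})$. The goal is to show that after $k_0=\lceil |B|\log|A|/d\rceil$ steps the set $A_{k_0}$ is empty.

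The central estimate is a double-counting inequality. Since every $a\in A_k$ retains degree at least $d$ in $B$,
$$\sum_{b\in B}|N(b)\cap A_k|\;=\;\sum_{a\in A_k}\deg(a)\;\geq\; d\,|A_k|,$$
so by averaging some $b\in B$ covers at least $d|A_k|/|B|$ elements of $A_k$. Consequently $|A_{k+1}|\leq |A_k|(1-d/|B|)$, and iterating gives
$$|A_k|\;\leq\; |A|\bigl(1-d/|B|\bigr)^{k}\;\leq\; |A|\,e^{-kd/|B|}.$$
For $k=k_0$ the exponential bound is at most $|A|e^{-\log|A|}=1$.

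The delicate point, which I expect to be the main obstacle, is extracting exactly the ceiling $\lceil|B|\log|A|/d\rceil$ rather than one more. To avoid the additive $+1$ that a naive rounding would introduce, I would invoke the strict inequality $-\log(1-x)>x$ valid on $(0,1)$, which sharpens the above chain to $|A|(1-d/|B|)^{k_0}<1$; since $|A_{k_0}|$ is a non-negative integer this forces $|A_{k_0}|=0$, and the constructed $B'$ dominates $A$ with $|B'|\leq k_0$. The standing hypotheses $|A|\geq 2$ and $d\geq 1$ guarantee $\log|A|>0$ and $d/|B|\in (0,1]$, so the estimate is meaningful; the boundary case $d=|B|$ is immediate because then any single vertex of $B$ already dominates $A$.
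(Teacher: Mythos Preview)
Your greedy argument is correct and is precisely the standard proof of Stein's lemma; the paper does not give its own proof but merely cites \cite{gsz}, where exactly this argument appears. Your handling of the ceiling via the strict inequality $(1-x)^{k}<e^{-kx}$ for $x\in(0,1)$ and the separate treatment of the degenerate case $d=|B|$ are both appropriate.
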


\begin{theorem}\label{ProbConstr}
Let $q>27$ be odd and $k$ be an integer satisfying 
\[
(q^2-q+1)\left\lceil \frac{8(q+1)}{q-1}\log q\right\rceil \leq k\leq q^3+1,
\]
where $\log $ denotes the natural base logarithm.
Then $\mathrm{PG}(2,q^2)$ contains semiovals of size $k.$
\end{theorem}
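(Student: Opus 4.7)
The plan is to combine Seib's partition of $\mathcal{H}_q$ into small arcs with the dominating-set argument of Lemma \ref{veletlen}, in order to produce a small $2$-blocking set of $\mathcal{H}_q$ and then invoke Corollary \ref{2-blocking}. All the difficulty will be packed into a single degree estimate for a natural bipartite graph.

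First, I would recall Seib's theorem, which partitions
$\mathcal{H}_q = \mathcal{A}_1\sqcup\cdots\sqcup\mathcal{A}_{q+1}$
into $q+1$ pairwise disjoint $(q^2-q+1)$-arcs (obtained as the orbits of the cyclic subgroup of $\mathrm{PGU}(3,q)$ of order $q^2-q+1$). For every $(q+1)$-secant $\ell$ of $\mathcal{H}_q$ we have $|\ell\cap\mathcal{A}_i|\in\{0,1,2\}$ (arc property), and setting $n_j(\ell)=\#\{i : |\ell\cap\mathcal{A}_i|=j\}$ the two relations $n_0+n_1+n_2=q+1$ and $n_1+2n_2=q+1$ give $n_0(\ell)=n_2(\ell)$ and $n_2(\ell)=(q+1-n_1(\ell))/2$.

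Next I would form the bipartite graph with parts $A=\{(q+1)\text{-secants of }\mathcal{H}_q\}$, so $|A|=q^4-q^3+q^2$, and $B=\{\mathcal{A}_1,\ldots,\mathcal{A}_{q+1}\}$, and an edge joining $\ell\in A$ to $\mathcal{A}_i\in B$ precisely when $|\ell\cap\mathcal{A}_i|=2$. The degree of $\ell$ in $A$ is then $n_2(\ell)$. The key technical step is to show that the minimum degree $d$ in $A$ satisfies $d\geq (q-1)/2$, equivalently $n_1(\ell)\leq 2$ for every secant $\ell$. This is where oddness of $q$ enters through the Singer-orbit structure of the Seib decomposition: the involution of $\mathrm{PGU}(3,q)$ that swaps the two intersection points of an arc with any of its secants preserves each $\mathcal{A}_i$, so apart from at most two exceptional indices the intersections $\ell\cap\mathcal{A}_i$ must occur in matched pairs. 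With the degree bound in hand, Lemma \ref{veletlen} gives a dominating subset $B'\subseteq B$ with
\[
|B'|\leq \left\lceil\frac{(q+1)\log(q^4-q^3+q^2)}{(q-1)/2}\right\rceil
\leq \left\lceil\frac{8(q+1)\log q}{q-1}\right\rceil,
\]
using $q^4-q^3+q^2<q^4$. Set $\mathcal{D}=\bigcup_{\mathcal{A}\in B'}\mathcal{A}\subseteq\mathcal{H}_q$; since every $(q+1)$-secant $\ell$ is dominated by some $\mathcal{A}\in B'$, we get $|\ell\cap\mathcal{D}|\geq|\ell\cap\mathcal{A}|=2$, so $\mathcal{D}$ is a $2$-blocking set of $\mathcal{H}_q$ of size at most $(q^2-q+1)\lceil 8(q+1)\log q/(q-1)\rceil$. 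Corollary \ref{2-blocking} then yields semiovals of all sizes $k$ with $|\mathcal{D}|\leq k\leq q^3+1$, which is the claim.

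The main obstacle is plainly the degree estimate $n_1(\ell)\leq 2$: the pure arc property only gives $n_2(\ell)\leq (q+1)/2$ and is consistent with $n_2(\ell)=0$ in principle, so the lower bound has to be extracted from the explicit group-theoretic description of the Seib arcs rather than from combinatorics alone. The role of the hypothesis $q>27$ is only to ensure that the resulting bound $(q^2-q+1)\lceil 8(q+1)\log q/(q-1)\rceil$ is strictly less than $q^3+1$, so that the stated interval of admissible sizes is non-empty.
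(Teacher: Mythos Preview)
Your overall architecture is exactly the paper's: Seib's partition $\mathcal{H}_q=\mathcal{A}_1\sqcup\cdots\sqcup\mathcal{A}_{q+1}$ into $(q^2-q+1)$-arcs, the same bipartite graph, Lemma~\ref{veletlen}, and Corollary~\ref{2-blocking}. The one real issue is the step you yourself flag as the obstacle, the bound $n_1(\ell)\le 2$.

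Your involution argument does not go through. The Seib arcs are the orbits of a cyclic subgroup $C\le \mathrm{PGU}(3,q)$ of order $q^2-q+1$; any $\sigma\in\mathrm{PGU}(3,q)$ preserving every $\mathcal{A}_i$ must normalise $C$, and the normaliser of this Singer-type torus has order $3(q^2-q+1)$, which is odd. Hence there is \emph{no} involution in $\mathrm{PGU}(3,q)$ stabilising all the arcs, so the sentence ``the involution \ldots\ preserves each $\mathcal{A}_i$'' cannot be made literal. Without such a global involution, nothing forces the singletons $\ell\cap\mathcal{A}_i$ to pair up.

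The paper obtains $n_1(\ell)\in\{0,2\}$ by a different (and short) route, quoting \cite[p.~10]{BJKRSSV2015}: for $P\in\mathcal{C}_i$, among the $q+1$ tangents to $\mathcal{C}_i$ at $P$ one is the unital tangent and each of the remaining $q$ is tangent to \emph{exactly one} other arc $\mathcal{C}_j$. So if a $(q+1)$-secant $\ell$ meets some $\mathcal{C}_i$ in a single point, that very statement says $\ell$ is tangent to precisely one further arc and to no others; hence $n_1(\ell)=2$, otherwise $n_1(\ell)=0$. This gives $d=(q-1)/2$ immediately. Replacing your involution sketch by this tangent-line fact makes the rest of your argument go through verbatim.
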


\begin{proof}
The Hermitian curve $\mathcal{H}_q$ is the disjoint union of $q+1$ 
$(q^2-q+1)$-arcs. Let $\mathcal{C}=\{ \mathcal{C}_{1}, \mathcal{C}_{2},\dots ,\mathcal{C}_{q+1}\} $ 
denote the set of these arcs.
If $P$ is a point of $\mathcal{H}_q$ that belongs to $\mathcal{C}_{i}$, then
the set of the $q+1$ tangents to $\mathcal{C}_{i}$ at $P$ contains the unique
tangent to $\mathcal{H}_q$ at $P,$ while each of the remaining $q$ lines
is a tangent to exactly one another element of $\mathcal{C}$; see \cite[page 10]{BJKRSSV2015}.
Thus any bisecant of $\mathcal{C}_{i}$ is a tangent 
to either $0$ or $2$ other arcs from $\mathcal{C}$,
hence is a bisecant of $(q-1)/2$ or $(q+1)/2$ elements of $\mathcal{C}.$ 

We define a bipartite graph with two vertex subsets $A$ and $B.$
Let the vertices in $B$ be the $(q^2-q+1)$-arcs giving the decomposition of $\mathcal H$, 
and the vertices
in $A$ be those lines that are not tangents to $\mathcal H$.
Let $a\in A$ and $b\in B$ be joined if and only if the corresponding
line is a bisecant of the corresponding arc.
Then $|B|=q+1,$ $|A|=q^4-q^3+q^2$ and $d=(q-1)/2.$ Hence from
Lemma \ref{veletlen} we get that there exists $B'\subset B$ dominating $A,$
and
$$|B'|\leq \left\lceil |B|\frac{\log (|A|)}{d}\right\rceil =
\left\lceil (q+1)\frac{\log (q^4-q^3+q^2)}{(q-1)/2}
\right\rceil \leq \left\lceil \frac{8(q+1)}{q-1}\log q\right\rceil .$$
Thus there exists a subset of $\left\lceil \frac{8(q+1)}{q-1}\log q\right\rceil $ arcs
${\mathcal V}\subset {\mathcal U}$ such that
each secant of $\mathcal H$ meets $\mathcal V$ in at least two points.  
Hence $\mathcal V$ is a semioval of size 
\begin{equation}\label{Bound2}
k_0=(q^2-q+1)\left\lceil \frac{8(q+1)}{q-1}\log q\right\rceil .
\end{equation}

If $k_0<k\leq q^3+1$ then Corollary \ref{2-blocking} guarantees the existence of a semioval of size $k.$
\end{proof}

In \cite{KMP2010} it was proved that there exist semiovals of sizes greater than 
\begin{equation}\label{Bound1}
q^2\left\lceil \frac{8q}{q+1}\log q\right\rceil + 1.
\end{equation}
The bound \eqref{Bound2} is smaller than the bound \eqref{Bound1} for infinitely many $q$. In fact, this happens when 
$$\left\lceil \frac{8q}{q+1}\log q\right\rceil=\left\lceil \frac{8(q+1)}{q-1}\log q\right\rceil.$$
Let $e=\lfloor \log q\rfloor$ and $f=\log q -\lfloor \log q\rfloor$. The previous equality is satisfied whenever 
$$\frac{e}{q}\leq f < \frac{q-1}{8(q+1)}-\frac{2e}{q+1}.$$
If $q$ is large enough, then $\frac{e}{q}=\epsilon$, with $\epsilon$ close to zero, and $\frac{q-1}{8(q+1)}-\frac{2e}{q+1}$ is greater than $1/9$. Therefore, for all $q$ such that 
$$ \epsilon \leq \log q -\lfloor \log q\rfloor <\frac{1}{9}$$
our bound is better than the previously known (the smallest prime power satisfying this condition is $q=137$).

\section{$2$-blocking sets of the Hermitian curve}
In this section we present a construction of an infinite family of $2$-blocking sets of the Hermitian curve which is a modification of the construction of $1$-blocking sets of the Hermitian curve presented in \cite{BJKRSSV2015}. Here we use the description of the Hermitian curve as given in \eqref{eq:HC}.
\begin{proposition}
Let $q$ be an odd prime power and $r$ be a divisor of $q-1$ such that $1<r <\frac{\sqrt{q}}{2}-\frac{2\log_2 q +1}{4}$. Then there exists a $2$-blocking set of the Hermitian curve for some value of $k$ satisfying the inequalities 
$$\frac{q^3-3q^2-2q}{r}+2q^2-q+2-2\lceil \log_2 q+1\rceil \leq k \leq \frac{q^3-3q^2-2q}{r}+2q^2+q+2+2\lceil \log_2 q+1\rceil .$$

\end{proposition}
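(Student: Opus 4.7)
The approach is to build on the $1$-blocking set $\mathcal{B}$ from the proof of Theorem~\ref{Th:Main} and modify it into a $2$-blocking set of $\mathcal{H}_q$ of size close to the claimed value. Recall that
$$\mathcal{B} = \{(x,y)\in\mathcal{U} : y = u^r + iv,\ u,v\in\mathbb{F}_q\}\cup\{X_{\infty}\},$$
and that the Hasse--Weil argument in the proof of Theorem~\ref{Th:Main} guarantees at least $(q-(2r-2)\sqrt{q})/r$ points of $\mathcal{B}$ on every non-horizontal non-tangent line of $\mathrm{PG}(2,q^2)$. The strengthened hypothesis $r<\sqrt{q}/2-(2\log_2 q+1)/4$ is precisely what is needed to make this lower bound exceed $2$ by a useful margin.

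The plan is to remove $s\approx (q+3)/r$ entire horizontal layers $\{(x,y)\in\mathcal{H}_q:y=u^r+iv\}$ from $\mathcal{B}$ (each layer accounting for $q$ affine points) and then, for each horizontal line (emptied layer or originally uncovered line $y_0\notin\{u^r+iv\}$) that now contains fewer than two points, re-add a single fresh point on that line, exactly as in Theorem~\ref{Th:Main}. A direct count shows that the value of $s$ giving the target leading term $\frac{q^3-3q^2-2q}{r}+2q^2$ is essentially $s=(q+3)/r$, with adjacent integer choices shifting the final size by $\pm(q-1)$, which accounts for the $\pm q$ slack in the stated interval.

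Since each non-horizontal non-tangent line loses at most one point per removed layer, the difficulty is to choose the $s$ layers so that no such line ends up with fewer than two points. I would model this as a domination problem: let the one side of a bipartite graph be the horizontal layers of $\mathcal{B}$ and the other side be the ``fragile'' non-horizontal non-tangent lines (those on which the Hasse--Weil lower bound is close to sharp); joining a layer to a fragile line when they share a point of $\mathcal{H}_q$, one then applies a Stein-type argument in the spirit of Lemma~\ref{veletlen} to find a set of layers whose removal leaves every fragile line with at least two surviving points of $\mathcal{B}$. This is the source of the additive $\lceil \log_2 q + 1\rceil$ correction and hence of the full $\pm 2\lceil\log_2 q+1\rceil$ slack in the final bound.

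The main obstacle is to verify that the cushion provided by the Hasse--Weil estimate under the hypothesis $r<\sqrt{q}/2-(2\log_2 q+1)/4$ is large enough to run the dominating-set argument on every non-horizontal non-tangent line simultaneously. Once this is established, Corollary~\ref{2-blocking} immediately promotes the resulting $2$-blocking set into semiovals of all sizes up to $q^3+1$, but already the $2$-blocking set itself of size in the claimed range proves the proposition.
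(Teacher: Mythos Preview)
Your approach diverges from the paper's in a fundamental way, and the domination step as you describe it does not close.

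The paper does not remove horizontal layers at all. It uses the Seib decomposition of $\mathcal{H}_q$ into $(q^2-q+1)$-arcs. First it takes the symmetric difference of $\mathcal{B}$ with a single such arc $\mathcal{C}$ through $X_\infty$: since $\mathcal{C}$ has exactly one affine point on each of $q^2-q$ horizontal lines, this simultaneously removes one point from almost every full horizontal line and adds one point to almost every line that had only $X_\infty$. Because $\mathcal{C}$ is an arc, each non-horizontal line is altered by at most two points. After this step at most $q$ horizontal lines remain bad (the unisecants to $\mathcal{C}$ through $X_\infty$). These are then handled by a halving argument: given $k$ bad horizontal lines, some arc $\mathcal{C}_i$ from the decomposition meets at least $k/2$ of them, so $\lceil\log_2 q+1\rceil$ further arcs suffice to hit them all, again each affecting any non-horizontal line by at most two points. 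The total perturbation on a non-horizontal secant is therefore $O(\log q)$, which the Hasse--Weil cushion absorbs under the hypothesis on $r$. That halving argument on arcs is the source of the $\lceil\log_2 q+1\rceil$ term.

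Your plan to delete $s\approx (q+3)/r$ full horizontal layers is far more destructive on the non-horizontal side: a non-horizontal secant $\ell$ has only about $q/r$ points of $\mathcal{B}$ to begin with, each in a distinct layer, so removing on the order of $q/r$ layers can in principle strip $\ell$ down to nothing. You invoke Stein's lemma, but that lemma produces a \emph{small} set that dominates every vertex on the other side; what you need here is a \emph{large} set of layers whose removal is simultaneously safe for each of the $\approx q^4$ secants, which is the opposite kind of statement, and the $\log_2$ correction you cite has no evident source in your construction. Finally, even if the selection problem were solved, your set would still contain many full horizontal secants, whereas the paper's $\widetilde{\mathcal{B}}$ is a $2$-blocking set containing no complete block of $\mathcal{H}_q$---a property the arc-symmetric-difference method delivers automatically and which is the real content of the section.
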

\proof
Let $\mathcal{B}$ defined by
$$
\mathcal{B} = \{(x, y) \in  \mathcal{U} \mid y = u^r + iv, u, v \in \mathbb{F}_q \} \cup  \{(1 : 0 : 0)\}\subset \mathcal{H}_q.
$$
As in Theorem \ref{Th:Main} it is possible to prove that each non-horizontal line, which is not a tangent line of the Hermitian curve, intersects $\mathcal{B}$ in at least $\frac{q-1-(2r-2)\sqrt{q}}{r}$ and in at most $\frac{q-1+(2r-2)\sqrt{q}}{r}$ points. The horizontal lines are either blocked only by $(1 : 0 : 0)$ or they are fully contained in $\mathcal{B}$. Consider a $(q^2-q+1)$-arc $\mathcal{C}$ through $(1 : 0 : 0)$ contained in $\mathcal{U}$. Among the $q^2$ horizontal lines through $(1 : 0 : 0)$, $q$ of them intersect $\mathcal{C}$ only in $(1 : 0 : 0)$, while the other $q^2-q$ contain an extra point of $\mathcal{C}$ other than $(1 : 0 : 0)$. 
Consider 
$$\overline{\mathcal{B}}:= (\mathcal{B} \Delta \mathcal{C}) \cup \{(1 : 0 : 0)\},$$
where $\Delta$ indicates the symmetric difference of the two sets. The set $\overline{\mathcal{B}}$ still blocks all the non-horizontal lines and they are not completely contained in $\overline{\mathcal{B}}$, since on each of these lines at most two points are deleted from $\mathcal{B}$ or added to $\mathcal{B}$. Also, at most $q$ horizontal lines, namely the $q$ unisecant lines to $\mathcal{C}$ through  $(1 : 0 : 0)$, either intersect $\overline{\mathcal{B}}$ only in $(1 : 0 : 0)$ or they are fully contained in $\overline{\mathcal{B}}$. 

Consider  $\mathcal{C}_1, \ldots, \mathcal{C}_{q+1}$ the $(q^2-q+1)$-arcs partitioning $\mathcal{U}$ and let $\ell_1,\ldots,\ell_k$ be  $k\leq q$ horizontal lines. There exists at least a $(q^2-q+1)$-arc $\mathcal{C}_i$ contained in $\mathcal{U}$ intersecting at least $\frac{k}{2}$ of such lines. On the contrary, suppose that all the $(q^2-q+1)$-arcs contained in $\mathcal{U}$ intersect at most $\frac{k}{2}-1$ of such lines. Then, each arc contains at most $k-2$ points of $\mathcal{U} \cap \left(\ell_1\cup \cdots\cup\ell_k\right)$. Since $\mathcal{U} \cap \left(\ell_1\cup \cdots\cup\ell_k\right)=kq+1$, then there should exist at least $\frac{kq+1}{k-2}>q+1$ of such arcs contained in $\mathcal{U}$ and pairwise disjoint. This is a contradiction. 

Arguing as before we can prove that, given $k$ horizontal lines, there exist at most $\lceil \log_2 k+1\rceil $  arcs among $\mathcal{C}_1, \ldots, \mathcal{C}_{q+1}$ such that their union intersects all the  $k$ lines. Note that in this case each of these lines contains at most $2\lceil \log_2 k+1\rceil$ points from the union of the $(q^2-q+1)$-arcs. 

Let $\ell_1,\ldots,\ell_{k_1}$ and $r_1,\ldots,r_{k_2}$ be the horizontal lines intersecting $\overline{\mathcal{B}}$ only in $(1:0:0)$ and in $q+1$ points, respectively. From above we know that $k_1+k_2\leq q$. Let $\mathcal{C}_{i_1}, \ldots, \mathcal{C}_{i_{j_1}}$, $j_1=\lceil \log_2 k_1+1 \rceil$, be the $(q^2-q+1)$-arcs intersecting $\ell_1,\ldots,\ell_{k_1}$ and let $\mathcal{C}_{h_1}, \ldots, \mathcal{C}_{h_{j_2}}$, $j_2=\lceil \log_2 k_2+1 \rceil$, be the $(q^2-q+1)$-arcs intersecting $r_1,\ldots,r_{k_2}$. In particular, let 
$$\mathcal{B}_1 := \left(\ell_1\cup \cdots\cup \ell_{k_1} \right) \cap \left( \mathcal{C}_{i_1}, \ldots, \mathcal{C}_{i_{j_1}}\right)$$
and 
$$\mathcal{B}_2 := \left(r_1\cup \cdots\cup r_{k_2}\right) \cap \left( \mathcal{C}_{h_1}, \ldots, \mathcal{C}_{h_{j_2}}\right).$$
We have that $|\mathcal{B}_1| \leq 2\lceil \log_2 k_1+1\rceil $ and $|\mathcal{B}_2| \leq 2\lceil \log_2 k_2+1\rceil $. The set 
$$\widetilde{\mathcal{B}} = \left(\overline{\mathcal{B}} \setminus \mathcal{B}_2\right) \cup  \mathcal{B}_1 \cup \{(1:0:0)\}$$
intersects each horizontal line in at least two and in at most $q$ points. Also, since a non-horizontal line $\ell$ intersects $\overline{\mathcal{B}}$ in $t$ points, where
$$2\lceil \log_2 q+1\rceil +2<\frac{q-1-(2r-2)\sqrt{q}}{r}\leq t\leq \frac{q-1+(2r-2)\sqrt{q}}{r}<q-2\lceil \log_2 q+1\rceil ,$$
then $\ell$ intersects $\widetilde{\mathcal{B}}$ in $\widetilde t$ points, where 
$$ 2<\frac{q-1-(2r-2)\sqrt{q}}{r}\leq \widetilde t\leq \frac{q-1+(2r-2)\sqrt{q}}{r}<q.$$
This proves that $\widetilde{\mathcal{B}}$ is a $2$-blocking set of the Hermitian curve not containing any block of it.

Finally, note that the size of $\mathcal{B}$ is $\left(q+\frac{(q-1)q}{r}\right)q +1$ and that the points of $\mathcal{B}$ lie on $s=q+\frac{q(q+1)}{r}$ horizontal lines. Therefore the size of $\overline{\mathcal{B}}$ satisfies
$$\left(q+\frac{(q-1)q}{r}\right)q +1 +(q^2-q-2s+1)\leq |\overline{\mathcal{B}}| \leq \left(q+\frac{(q-1)q}{r}\right)q +1 +(q^2+q-2s+1).$$
To obtain $\widetilde{\mathcal{B}}$ we add or delete at most $2\lceil \log_2 q+1\rceil $ points. So,
$$\left(q+\frac{(q-1)q}{r}\right)q +1 +(q^2-q-2s+1)-2\lceil \log_2 q+1\rceil \leq |\widetilde{\mathcal{B}}|\leq$$
$$\leq \left(q+\frac{(q-1)q}{r}\right)q +1 +(q^2+q-2s+1)+2\lceil \log_2 q+1\rceil .$$
\endproof

\begin{flushleft}
Daniele Bartoli\\
Department of Mathematics,\\
Ghent University,\\
Krijgslaan 281, 9000 Ghent, Belgium\\
e-mails: {\sf dbartoli@cage.ugent.be}
\end{flushleft}

\begin{flushleft}
Gy\"orgy Kiss \\
Department of Geometry and
MTA-ELTE GAC Research Group \\
E\"otv\"os Lor\'and University \\
1117 Budapest, P\'azm\'any s. 1/c, Hungary \\
e-mail: {\sf kissgy@cs.elte.hu}
\end{flushleft}

\begin{flushleft}
Stefano Marcugini and Fernanda Pambianco \\
Dipartimento di Matematica e Informatica,
Universit\`{a} degli Studi di Perugia \\
Via Vanvitelli 1, 06123 Perugia, Italy \\
e-mails: {\sf gino@dmi.unipg.it}, {\sf fernanda@dmi.unipg.it}
\end{flushleft}

\end{document}